\newtheorem{theorem}{Theorem}[section]
\newtheorem{lemma}[theorem]{Lemma}
\theoremstyle{definition}
\newtheorem{note}[theorem]{Note}
\newcommand{\be}{\begin{equation}}
\newcommand{\ee}{\end{equation}}
\newcommand{\ol}{\overline}
\newcommand{\R}{\mathbf{R}}
\newcommand{\C}{\mathcal{C}}
\newcommand{\G}{\Gamma}
\newcommand{\M}{\mathcal{M}}
\newcommand{\dvol}{\textup{dvol}}
\renewcommand{\epsilon}{\varepsilon}
\renewcommand{\S}{\mathbf{S}}
\DeclareMathOperator{\dist}{dist}
\begin{document}
\setlength{\baselineskip}{1.2\baselineskip}

\title[Continuity of total curvatures of Riemannian hypersurfaces] 
{Continuity of total curvatures of \\Riemannian hypersurfaces}

\author{Mohammad Ghomi}
\address{School of Mathematics, Georgia Institute of Technology,
Atlanta, GA 30332}
\email{ghomi@math.gatech.edu}
\urladdr{www.math.gatech.edu/~ghomi}

\vspace*{-0.75in}
\begin{abstract}
We show that total generalized mean curvatures of hypersurfaces with positive reach in Riemannian manifolds, and convex bodies in Cartan-Hadamard spaces, are continuous with respect to Hausdorff distance. 
\end{abstract}

\date{\today \,(Last Typeset)}
\subjclass[2010]{Primary: 53C21, 53C65; Secondary: 52A20, 53C24.}
\keywords{Quermassintegral,  Curvature measures, Generalized mean curvature, Positive reach, Parallel hypersurfaces, Universal differential forms, Smooth valuation, Normal cycle.}
\thanks{The research of the author was supported by NSF grant DMS-2202337.}

\maketitle


\section{Introduction}\label{sec:intro}
Let $\Gamma$ be a $\C^{1,1}$ hypersurface in a Riemannian $n$-manifold $M$, cooriented by a (continuous) unit normal vector field $\nu$. Then the principal curvatures 
$\kappa:=(\kappa_1,\dots,\kappa_{n-1})$ of $\Gamma$ with respect to $\nu$ are well-defined almost everywhere, by Rademacher's theorem, and the   \emph{total $r^{th}$  mean curvature} of $\Gamma$ is given by
$$
\M_{r}(\G):=\int_\G \sigma_r(\kappa), 
$$
where 
$\sigma_r(\kappa):=\sum_{1\leq i_1<\dots<i_r\leq n-1}\kappa_{i_1}\dots \kappa_{i_r}$ are the elementary symmetric polynomials, for $1\leq r\leq n-1$.
By convention, $\sigma_0:=1$, and $\sigma_r:=0$ for $r\geq n$.
 Up to  multiplicative constants, depending only on $n$, $\mathcal{M}_r(\Gamma)$ form the coefficients of the generalized Steiner's polynomial and Weyl's tube formula \cite{gray2004}. They are also known as quermassintegrals when $\Gamma$ is a convex hypersurface in Euclidean space $\R^n$ \cite{schneider2014}. Here we study the continuity of these fundamental objects. In particular, we show:

\begin{theorem}\label{thm:main}
Let $\Gamma$ be a closed cooriented hypersurface with positive reach embedded in a Riemannian manifold $M$. Suppose there exists a sequence of closed embedded hypersurfaces $\Gamma_i\subset M$ with uniformly positive reach, and coorientations consistent with that of $\Gamma$, such  that $\Gamma_i\to\Gamma$ with respect to Hausdorff distance. Then $\mathcal{M}_r(\Gamma_i)\to \mathcal{M}_r(\Gamma)$.
\end{theorem}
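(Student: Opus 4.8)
The plan is to express each total mean curvature as a fixed smooth differential form paired with the \emph{normal cycle} of the hypersurface, and then to prove that the normal cycles converge weakly. The point of this reformulation is that, although $\sigma_r(\kappa)$ is a nonlinear (degree $r$) function of the principal curvatures, the integral $\mathcal{M}_r$ becomes a \emph{linear} functional of the normal cycle. This linearity is precisely what allows a hypothesis as weak as Hausdorff convergence---which controls the $\Gamma_i$ only in $C^0$---to force convergence of quantities built from second-order data.

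First I would attach to a hypersurface $\Gamma$ of positive reach its unit normal bundle $\mathrm{nor}(\Gamma)$ inside the sphere bundle $SM$. Since positive reach makes $\Gamma$ a $C^{1,1}$ hypersurface, $\mathrm{nor}(\Gamma)$ is a Lipschitz $(n-1)$-dimensional set with a canonical orientation, and integration over it defines the normal cycle $N_\Gamma$, an integer-multiplicity current in $SM$ with $\partial N_\Gamma=0$. Using the Levi-Civita connection and curvature tensor of $M$, I would construct universal $(n-1)$-forms $\varphi_r$ on $SM$---the Riemannian Lipschitz--Killing forms underlying Weyl's tube formula---satisfying
$$\mathcal{M}_r(\Gamma)=N_\Gamma(\varphi_r).$$
The identity is checked by pulling $\varphi_r$ back along the graph map $x\mapsto(x,\nu(x))$ and evaluating in a principal curvature frame, where it reduces to $\int_\Gamma\sigma_r(\kappa)$. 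Because the $\Gamma_i$ eventually lie in a fixed compact set, $\varphi_r$ may be taken compactly supported, so it now suffices to show $N_{\Gamma_i}\to N_\Gamma$ weakly.

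To obtain this convergence I would argue by compactness. Uniform reach $\ge\rho$ yields a common two-sided tubular neighborhood and uniform $C^{1,1}$ bounds, hence a uniform upper bound on the areas of the $\Gamma_i$ and thus on the masses $\mathbf{M}(N_{\Gamma_i})$; as each $N_{\Gamma_i}$ is a cycle, the Federer--Fleming compactness theorem extracts a subsequence converging in the flat norm to an integral cycle $T$. It then remains to identify $T=N_\Gamma$. Each $N_{\Gamma_i}$ is Legendrian, annihilating the contact form $\alpha$ and $d\alpha$ on $SM$, and these closed conditions pass to the flat limit, so $T$ is Legendrian. Moreover uniform reach makes the signed distance functions $\delta_i$ uniformly $C^{1,1}$, so by Arzel\`a--Ascoli $\delta_i\to\delta_\Gamma$ in $C^1$ and the associated normal directions converge; this confines $\mathrm{spt}(T)$ to $\mathrm{nor}(\Gamma)$. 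A Legendrian integral cycle supported in $\mathrm{nor}(\Gamma)$ and projecting onto $\Gamma$ with the correct orientation is forced to equal $N_\Gamma$ by the uniqueness theory for normal cycles of sets of positive reach, so $T=N_\Gamma$; as every subsequential limit coincides with $N_\Gamma$, the whole sequence converges.

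I expect the identification of the limit (the last step) to be the main obstacle. Uniform mass bounds and compactness are routine once uniform reach is available, but flat convergence by itself could produce a limit carrying spurious multiplicity or traversing parts of the normal bundle with the wrong orientation, and excluding this requires the full interplay of the Legendrian constraint, the $C^1$ convergence of the distance functions, and the positive reach of the limit $\Gamma$. This is also the stage at which the nonlinearity of $\sigma_r$---which would defeat any direct attempt to pass to the limit in the second fundamental forms, since these converge only weakly-$*$ in $L^\infty$ and products do not---is finally circumvented: once $N_{\Gamma_i}\to N_\Gamma$ is known, linearity of the pairing with the fixed form $\varphi_r$ yields $\mathcal{M}_r(\Gamma_i)\to\mathcal{M}_r(\Gamma)$ with no further analysis.
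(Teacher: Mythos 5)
Your proposal takes a genuinely different route from the paper's, and in fact it is precisely the route the paper flags in its introduction and deliberately avoids: the author notes the theorem ``should follow from the theory of smooth valuations assuming convergence of normal cycles,'' but that this convergence theory ``has not been explicitly developed in Riemannian manifolds.'' Both arguments share the same starting point --- representing $\mathcal{M}_r$ by pairing the graph of the unit normal in $SM$ against a universal $(n-1)$-form (the paper's Lemma \ref{lem:universal}, your $N_\Gamma(\varphi_r)$) --- but beyond that the paper uses no geometric measure theory at all: it proves that the signed distance functions converge in $\C^{1,1}$ (Lemma \ref{lem:Gi}), extends the normal fields by Kirszbraun's theorem to a uniformly Lipschitz unit field on the region $\Omega_i$ between $\Gamma_i$ and $\Gamma$ (Lemma \ref{lem:extension}), and concludes $|\mathcal{M}_r(\Gamma_i)-\mathcal{M}_r(\Gamma)|=\left|\int_{\Omega_i}\overline{\nu}_i^*(d\Phi_r)\right|\leq C|\Omega_i|\to 0$ by Stokes' theorem, inserting a parallel hypersurface $\Gamma'$ to handle the case where $\Gamma_i$ meets $\Gamma$. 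What your route would buy, if completed, is a stronger conclusion: weak convergence of normal cycles gives convergence of all curvature measures and smooth valuations, localized versions included, not just the global quantities $\mathcal{M}_r$. What it costs is that the step you rightly single out as the main obstacle cannot be settled by citation: the ``uniqueness theory for normal cycles of sets of positive reach'' you invoke exists in the literature for Euclidean space (Fu, Z\"ahle, Rataj--Z\"ahle), not for general Riemannian $M$, so you would have to prove the identification yourself --- say by support confinement (your Arzel\`a--Ascoli remark, which is essentially the paper's Lemma \ref{lem:Gi} in disguise), the constancy theorem on the Lipschitz $(n-1)$-manifold $\mathrm{nor}(\Gamma)$ to get $T=m\,N_\Gamma$ on each component, and a degree argument giving $\pi_*T=[\Gamma]$ so that $m=1$; note it is exactly the uniform reach hypothesis that rules out pathologies such as two parallel sheets converging onto $\Gamma$ with total multiplicity two. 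So your outline is sound and could in principle be completed, but its hardest step contains the paper's key lemma as an ingredient plus additional current-theoretic work that the direct Stokes-theorem argument bypasses entirely.
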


The \emph{reach} of $\Gamma$, denoted by $\text{reach}(\Gamma)$, is the supremum of $\epsilon\geq 0$ such that through each point of $\Gamma$ there pass a pair of (geodesic) balls of radius $\epsilon$ whose interiors are disjoint from $\Gamma$. If $\text{reach}(\Gamma)>0$, then $\Gamma$ is $\C^{1,1}$ \cite[Lem. 2.6]{ghomi-spruck2022}. Thus $\M_r(\Gamma)$ is well-defined.  If $\Gamma_i\to\Gamma$ with respect to Hausdorff distance, then $\Gamma_i\to\Gamma$ in $\C^{1}$-topology (Lemma \ref{lem:Gi}). In particular, if $\Gamma$ is cooriented by the unit normal vector field $\nu$, then we may choose unit normal vector fields $\nu_i$ along $\Gamma_i$ such that $\nu_i\to\nu$ in local coordinates. This is what we mean by the assumption that the coorientations of $\Gamma_i$ are \emph{consistent} with that of $\Gamma$.

Theorem \ref{thm:main} was established in $\R^n$ by Federer \cite[Thm. 5.9]{federer1959}. The general version should follow from the theory of smooth valuations \cite{alesker2007},   and convergence of normal cycles \cite{fu1994,zahle1986}. The latter can be reduced to the Euclidean case \cite[Thm. 4.13]{federer1959} via local charts, since positive reach is preserved under diffeomorphisms \cite{bangert1982}. Here we give a more direct and fairly self-contained argument via universal differential forms introduced by Chern \cite{chern1945}.

The prime motivation for this work is the next observation, which we again establish geometrically using Theorem  \ref{thm:main} and some recent results
for total curvatures \cite{ghomi-spruck2023total}. A \emph{Cartan-Hadamard manifold} $M$ is a  complete, simply connected manifold with nonpositive curvature. A subset of $M$ is \emph{convex} if it contains the geodesic connecting every pair of its points. A \emph{convex hypersurface} $\Gamma\subset M$ is the boundary of a compact convex set with interior points, which we assume to be cooriented by the outward normal. We define 
\begin{equation}\label{eq:def}
\mathcal{M}_r(\Gamma):=\lim_{\epsilon\searrow 0}\M_r(\Gamma^\epsilon),
\end{equation}
 where $\Gamma^\epsilon$ denotes the outer parallel hypersurface of $\Gamma$ at distance $\epsilon$. Note that $\mathcal{M}_r(\Gamma^\epsilon)$ is well-defined since $\text{reach}(\Gamma^\epsilon)\geq\epsilon$ and thus $\Gamma^\epsilon$ is $\C^{1,1}$ for $\epsilon>0$ \cite[Lem. 2.6]{ghomi-spruck2022}. Furthermore,
the limit exists since 
\begin{equation}\label{eq:nondecreasing}
\epsilon\mapsto\mathcal{M}_r(\Gamma^\epsilon)  \;\,\text{is nondecreasing},
\end{equation}
by \cite[Cor. 4.4]{ghomi-spruck2023total}, and $\mathcal{M}_r(\Gamma^\epsilon)\geq 0$ since $\Gamma^\epsilon$ is convex (and cooriented by the outward normal). See \cite{ghomi-spruck2022} for basic facts about convex sets and their distance functions in Cartan-Hadamard manifolds.

\begin{theorem}\label{thm:main2}
The total curvature functionals $\mathcal{M}_r$ are continuous on the space of convex hypersurfaces in a Cartan-Hadamard manifold with respect to Hausdorff distance.
\end{theorem}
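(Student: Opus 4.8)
The plan is to approximate each convex hypersurface by its outer parallel hypersurfaces, where the reach is controlled and Theorem \ref{thm:main} applies, and then to let the offset tend to zero. Write $\Gamma=\partial K$ and $\Gamma_i=\partial K_i$ for the associated convex bodies, and set $\delta_i:=d_H(K_i,K)$; since $\Gamma_i\to\Gamma$ in Hausdorff distance we have $K_i\to K$, so $\delta_i\to 0$. Recalling that $\epsilon\mapsto\mathcal{M}_r(\Gamma^\epsilon)$ is nondecreasing by \eqref{eq:nondecreasing}, the definition \eqref{eq:def} reads $\mathcal{M}_r(\Gamma)=\inf_{\epsilon>0}\mathcal{M}_r(\Gamma^\epsilon)$, and likewise for each $\Gamma_i$. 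Thus the theorem amounts to the interchange $\lim_i\inf_\epsilon\mathcal{M}_r(\Gamma_i^\epsilon)=\inf_\epsilon\lim_i\mathcal{M}_r(\Gamma_i^\epsilon)$, and the whole difficulty is concentrated at the offset $\epsilon=0$.

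First I would record convergence at a fixed offset $\epsilon>0$. From $K_i\subseteq K^{\delta_i}$ and the semigroup property of outer parallel bodies in a Cartan--Hadamard manifold, $K_i^\epsilon\subseteq (K^\epsilon)^{\delta_i}$, and symmetrically $K^\epsilon\subseteq (K_i^\epsilon)^{\delta_i}$, whence $d_H(K_i^\epsilon,K^\epsilon)\le\delta_i\to 0$ and so $\Gamma_i^\epsilon\to\Gamma^\epsilon$ in Hausdorff distance. Since $\text{reach}(\Gamma_i^\epsilon)\ge\epsilon$ uniformly in $i$ and $\text{reach}(\Gamma^\epsilon)\ge\epsilon$, Theorem \ref{thm:main} yields $\mathcal{M}_r(\Gamma_i^\epsilon)\to\mathcal{M}_r(\Gamma^\epsilon)$ for every fixed $\epsilon>0$.

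The hard part is to make this convergence uniform down to $\epsilon=0$, i.e. to prove a modulus estimate $0\le\mathcal{M}_r(\Gamma_i^\epsilon)-\mathcal{M}_r(\Gamma_i)\le\omega(\epsilon)$ with $\omega(\epsilon)\to 0$ independent of $i$. For this I would differentiate along the parallel flow: by the first variation (Riccati) formula for total mean curvatures in \cite{ghomi-spruck2023total}, for $s>0$ one has $\frac{d}{ds}\mathcal{M}_r(\Gamma_i^s)=\int_{\Gamma_i^s}\big((r+1)\sigma_{r+1}(\kappa)-\sum_l\frac{\partial\sigma_r}{\partial\kappa_l}R_l\big)$, where the $R_l\le 0$ are the relevant ambient sectional curvatures; hence $\frac{d}{ds}\mathcal{M}_r(\Gamma_i^s)\le (r+1)\mathcal{M}_{r+1}(\Gamma_i^s)+\Lambda(n-r)\mathcal{M}_{r-1}(\Gamma_i^s)$, where $\Lambda$ bounds the absolute value of the curvature on a fixed compact region containing all $K_i^{\epsilon_0}$ (available since $K_i\to K$ are uniformly bounded). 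By \eqref{eq:nondecreasing} each term on the right is dominated by its value at a fixed level $\epsilon_0$, and there the bound $\text{reach}(\Gamma_i^{\epsilon_0})\ge\epsilon_0$ forces $\kappa_l\in[0,1/\epsilon_0]$, while the surface areas are uniformly bounded because the $K_i^{\epsilon_0}$ lie in a common ball and boundary area is monotone under inclusion of convex bodies. This makes the integrand bounded uniformly in $i$ and in $s\in(0,\epsilon_0]$, and integrating from $0$ to $\epsilon$ produces $\omega(\epsilon)=C\epsilon$ with $C$ depending only on $\epsilon_0,n,r,\Lambda$ and the enclosing ball, but not on $i$.

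Granting the modulus, the theorem follows from a three-$\epsilon$ argument. Writing $\mathcal{M}_r(\Gamma_i)-\mathcal{M}_r(\Gamma)$ as the telescoping sum of $\mathcal{M}_r(\Gamma_i)-\mathcal{M}_r(\Gamma_i^\epsilon)$, $\mathcal{M}_r(\Gamma_i^\epsilon)-\mathcal{M}_r(\Gamma^\epsilon)$, and $\mathcal{M}_r(\Gamma^\epsilon)-\mathcal{M}_r(\Gamma)$, the first and third terms are at most $\omega(\epsilon)$ by the modulus (the third applied to $\Gamma$ itself), while the middle term tends to $0$ for fixed $\epsilon$ by the second paragraph. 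Choosing $\epsilon$ small and then $i$ large gives $\mathcal{M}_r(\Gamma_i)\to\mathcal{M}_r(\Gamma)$. The principal obstacle is the uniform modulus of the third paragraph: the fixed-offset convergence and the monotonicity are comparatively soft, but ruling out a loss of curvature mass as $\epsilon\to 0$, uniformly over the sequence, is exactly what requires the quantitative variation formula together with the uniform reach and area bounds.
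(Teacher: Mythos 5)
Your proposal is correct and follows the same skeleton as the paper's proof: approximate by outer parallel hypersurfaces, split $\M_r(\Gamma_i)-\M_r(\Gamma)$ into the same three terms, kill the middle term at fixed $\epsilon$ by Theorem \ref{thm:main} (your verification via the semigroup property $K_i^\epsilon\subseteq (K^\epsilon)^{\delta_i}$ is a detail the paper leaves implicit), and control the offset terms by a parallel-hypersurface comparison estimate derived from \cite{ghomi-spruck2023total} together with the monotonicity \eqref{eq:nondecreasing}. The one genuine difference is how uniformity in $i$ of the first term is achieved. The paper's Lemma \ref{lem:comparison} bounds $|\M_r(\Gamma_i)-\M_r(\Gamma_i^\epsilon)|$ by $\big((r+1)\M_{r+1}(\Gamma_i^\epsilon)+C\M_{r-1}(\Gamma_i^\epsilon)\big)\epsilon$ and then simply applies Theorem \ref{thm:main} \emph{again}, to $\M_{r\pm1}$, to pass these coefficients to the limit before sending $\epsilon\to 0$; no uniform-in-$i$ modulus is ever needed. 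You instead manufacture an explicit modulus $\omega(\epsilon)=C\epsilon$ by pushing $\M_{r\pm1}(\Gamma_i^s)$ up to a fixed level $\epsilon_0$ via \eqref{eq:nondecreasing} and then bounding curvature there by the reach and area by inclusion monotonicity of convex boundaries in Cartan-Hadamard manifolds. This is valid and more quantitative, but it costs you two extra geometric inputs, and one of them is stated imprecisely: reach $\geq\epsilon_0$ does \emph{not} give $\kappa_l\leq 1/\epsilon_0$ in a Cartan-Hadamard manifold, since geodesic spheres of radius $\epsilon_0$ there have principal curvatures up to $\sqrt{\Lambda}\coth(\sqrt{\Lambda}\,\epsilon_0)>1/\epsilon_0$; the bound $\kappa_l\leq \sqrt{\Lambda}\coth(\sqrt{\Lambda}\,\epsilon_0)$ is what holds, and it suffices for your argument since it is finite and independent of $i$. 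A second, smaller caution: your first-variation formula is applied to hypersurfaces that are only $\C^{1,1}$, so the differentiation in $s$ should be justified as the paper does, by applying the integrated inequality of \cite{ghomi-spruck2023total} to the parallel hypersurfaces $\Gamma^\delta$, $0<\delta<\epsilon$, and taking $\delta\to 0$; with those two repairs your route goes through.
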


This result simplifies a number of arguments, e.g., see \cite[Note 3.7]{ghomi-spruck2022} and \cite[Lem. 3.3]{ghomi-spruck2023minkowski},  related to the \emph{Cartan-Hadamard conjecture} on the isoperimetric inequality in spaces of nonpositive curvature. The conjecture follows  if the \emph{total Gauss-Kronecker curvature}
\begin{equation}\label{eq:conjecture}
\M_{n-1}(\Gamma)\geq|\S^{n-1}|
\end{equation}
for convex hypersurfaces $\Gamma$ in Cartan-Hadamard manifolds, where $|\S^{n-1}|$ is the volume of the unit sphere in $\R^n$  \cite{ghomi-spruck2022}. Our proof of Theorem \ref{thm:main2} employs an estimate for total curvatures of parallel hypersurfaces (Lemma \ref{lem:comparison}) which may be of further interest.

\section{Universal Differential Forms}
Let $M$ be a Riemannian manifold, and $\Gamma\subset M$ be a hypersurface cooriented  by a unit normal vector field $\nu$.
Let $T_pM$ be the tangent space of $M$ at a point $p$, $S_p\subset T_p M$ be the set of unit vectors, and $SM:=\{(p,u)\mid p\in M, u\in S_p\}$ denote the unit tangent bundle of $M$. Let  $\ol \nu\colon \Gamma\to SM$ be given by $\ol \nu(p):=(p,\nu(p))$. The following fact is established in \cite[Prop. 3.8]{bernig-brocker2003}. 

\begin{lemma}[Bernig-Br\"{o}cker \cite{bernig-brocker2003}]\label{lem:universal}
For $0\leq r\leq n-1$, there exists an $(n-1)$-form $\Phi_r$ on $SM$ such that 
$$
\M_r(\Gamma)=\int_\Gamma\ol\nu^*(\Phi_r),
$$
 for any $\C^{1,1}$ hypersurface $\Gamma\subset M$.
\end{lemma}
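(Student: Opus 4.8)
The goal is to construct, for each $r$, an $(n-1)$-form $\Phi_r$ on the unit tangent bundle $SM$ whose pullback under the normal Gauss map $\ol\nu$ integrates to $\M_r(\Gamma)$. My plan is to realize $\Phi_r$ as the term extracting $\sigma_r(\kappa)$ from the natural volume form that a hypersurface induces on $SM$ through its normal field, following the philosophy of Chern's universal differential forms. The key structural input is the geometry of $SM$ as a $(2n-1)$-dimensional manifold carrying canonical objects independent of any particular $\Gamma$: the tautological (contact) $1$-form, the Levi-Civita connection on $TM$ pulled back to $SM$, and the associated horizontal and vertical distributions.

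First I would fix, over a local trivialization of $SM$, an adapted orthonormal coframe. At a point $(p,u)\in SM$ choose an orthonormal frame $e_1,\dots,e_{n-1}$ of $u^\perp\subset T_pM$ together with $e_n:=u$, and let $\omega^1,\dots,\omega^n$ be the dual horizontal $1$-forms (pullbacks of the solder forms) and $\omega^{i}_{\,j}$ the connection $1$-forms; the $\omega^{i}_{\,n}$ restricted to the fibers give the vertical directions. The point of passing to $SM$ is that the second fundamental form of $\Gamma$ is encoded tautologically: if $\G$ has unit normal $\nu$, then $\ol\nu^*(\omega^{i}_{\,n})=-\sum_j \I\!\I_{ij}\,\ol\nu^*(\omega^j)$, so pullbacks of the connection forms record the shape operator without our needing to know it in advance. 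I would then define
\begin{equation*}
\Phi_r := c_{n,r}\sum_{} \epsilon(\pi)\,\omega^{\pi(1)}\wedge\dots\wedge\omega^{\pi(r)}\wedge\omega^{\pi(r+1)}_{\,n}\wedge\dots\wedge\omega^{\pi(n-1)}_{\,n},
\end{equation*}
the sum running over permutations $\pi$ of $\{1,\dots,n-1\}$ with $\epsilon(\pi)$ the sign and $c_{n,r}$ a combinatorial normalizing constant, so that $\Phi_r$ interpolates between the horizontal volume form (for $r=0$) and the purely vertical form pulling back to Gauss--Kronecker curvature (for $r=n-1$).

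The two properties to verify are that $\Phi_r$ is globally well-defined on $SM$ and that its pullback computes $\M_r$. Well-definedness follows because the summation over $\pi$ is exactly the antisymmetrization making the expression invariant under the residual $O(n-1)$ gauge acting on the frame $e_1,\dots,e_{n-1}$ of $u^\perp$; one checks that a change of adapted frame transforms the $\omega^i$ and $\omega^{i}_{\,n}$ by the same orthogonal matrix, and the determinantal structure of $\Phi_r$ absorbs it. The integral identity then reduces to a pointwise computation: substituting $\ol\nu^*(\omega^{i}_{\,n})=-\sum_j \I\!\I_{ij}\ol\nu^*(\omega^j)$ collapses each vertical factor into horizontal ones weighted by the shape operator, and the antisymmetrized sum reproduces precisely the sum of $r\times r$ principal minors of the second fundamental form, namely $\sigma_r(\kappa)$ times the induced area form $\ol\nu^*(\omega^1)\wedge\dots\wedge\ol\nu^*(\omega^{n-1})$. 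Integrating over $\G$ yields $\M_r(\G)$.

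The main obstacle is establishing global well-definedness and naturality rather than the local identity, which is essentially linear algebra. The issue is that the adapted coframe exists only locally, so I must argue that the local expressions for $\Phi_r$ agree on overlaps and transform correctly under the structure group of $SM$; this requires care with how the connection forms $\omega^{i}_{\,j}$ change under frame rotations, since they do not transform tensorially but via the inhomogeneous gauge rule, and one must confirm that the extra terms cancel after antisymmetrization. A secondary subtlety is the regularity hypothesis: $\Gamma$ is only $\C^{1,1}$, so $\nu$ is merely Lipschitz and $\ol\nu$ is differentiable almost everywhere; I would invoke Rademacher's theorem to make sense of the pullback $\ol\nu^*(\Phi_r)$ as an $L^\infty$ form and confirm that the change-of-variables argument survives under this weaker regularity. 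Since the construction and its verification appear already in \cite[Prop. 3.8]{bernig-brocker2003}, I would cite that source for the detailed gauge computation and present the pointwise reduction to $\sigma_r(\kappa)$ as the conceptual content.
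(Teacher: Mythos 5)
Your route differs from the paper's, and that in itself is fine: the paper does not actually prove this lemma, but quotes it from \cite{bernig-brocker2003}~[Prop.~3.8], and then only works out a coordinate-free construction of the top form $\Phi_{n-1}$, realized as the pullback $\pi^*(\dvol_{S_p})$ of the fiber volume form under the vertical projection, verified on a principal frame. Your moving-frames definition of all the $\Phi_r$ is the classical Chern-style construction that the paper's references carry out in detail, so attempting it directly is a reasonable, indeed more self-contained, alternative. The problem is that your central formula, as displayed, is wrong: the horizontal and vertical factors are interchanged.

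Concretely, only the vertical factors acquire shape-operator entries upon pullback, via $\ol\nu^*(\omega^{i}_{\,n})=\sum_j \II_{ij}\,\ol\nu^*(\omega^j)$ (with the paper's convention $\nabla_{e_i}\nu=\kappa_i e_i$ this relation carries a plus sign, not a minus; the sign is absorbed into $c_{n,r}$). Since your $\Phi_r$ contains $n-1-r$ vertical factors, the antisymmetrized pullback produces the $(n-1-r)\times(n-1-r)$ minors, i.e.
\begin{equation*}
\ol\nu^*(\Phi_r)=c\,\sigma_{n-1-r}(\kappa)\,\dvol_\Gamma,
\qquad\text{hence}\qquad
\int_\Gamma\ol\nu^*(\Phi_r)=c\,\M_{n-1-r}(\Gamma),
\end{equation*}
not $\M_r(\Gamma)$. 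Your own consistency check exposes the clash: for $r=0$ your formula reads $\omega^{\pi(1)}_{\,n}\wedge\dots\wedge\omega^{\pi(n-1)}_{\,n}$, a purely vertical form, which pulls back to Gauss--Kronecker curvature, whereas $\M_0(\Gamma)$ is the area of $\Gamma$; for $r=n-1$ it is purely horizontal. The fix is to take $r$ vertical and $n-1-r$ horizontal factors,
\begin{equation*}
\Phi_r:=c_{n,r}\sum_{\pi}\epsilon(\pi)\,
\omega^{\pi(1)}_{\,n}\wedge\dots\wedge\omega^{\pi(r)}_{\,n}\wedge
\omega^{\pi(r+1)}\wedge\dots\wedge\omega^{\pi(n-1)}.
\end{equation*}
With that correction the rest of your argument goes through, and the gauge step is in fact easier than you suggest: the inhomogeneous terms in the transformation law afflict only the connection forms $\omega^{i}_{\,j}$ with $i,j\leq n-1$, which do not appear in $\Phi_r$; because $e_n=u$ is the tautological vector, the $\omega^{i}_{\,n}$ transform tensorially under a change of adapted frame $A\in O(n-1)$, exactly as the $\omega^{i}$ do, so the antisymmetrized expression changes only by $\det A=\pm1$. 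That residual sign (consistent orientation of the fibers) and the $\C^{1,1}$ regularity point you raise are the two items legitimately delegated to Bernig--Br\"{o}cker.
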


The forms $\Phi_r$ are called \emph{universal} \cite{bernig-brocker2003} because they do not depend on $\Gamma$.
The form $\Phi_{n-1}$ corresponds to $\Phi_0$ in Chern  \cite[p. 675]{chern1945}, and is also described by Borbely \cite{borbely2002}.
See \cite{ghomi2024} for a concise construction of $\Phi_r$  in terms of the connection forms of $M$ and dual one forms of the principal frame of $\Gamma$. 

We describe a geometric construction for $\Phi:=\Phi_{n-1}$, which avoids exterior algebra. This is of special interest in connection with conjecture \eqref{eq:conjecture}. Let $GK:=\sigma_{n-1}(\kappa)$ be the \emph{Gauss-Kronecker curvature} of $\Gamma$.
To motivate our approach,  note that in $\R^n$
$$
\nu^*(\dvol_{\S^{n-1}})=GK \dvol_\Gamma,
$$
where $\dvol$ stands for volume form.
So the Gauss-Kronecker curvature of $\Gamma$ is the Jacobian of $\nu$ viewed as the Gauss map $\Gamma\to\S^{n-1}$. 
Hence we may set
$
\Phi:=\dvol_{\S^{n-1}}.
$
 To extend this concept to $M$, note that each tangent space of $SM$ admits a  decomposition \cite[Sec. 1.3]{paternain1999} into ``horizontal'' and ``vertical'' components given by
 $$
 T_{(p,v)} SM=T_p M\oplus (v^\perp)\simeq T_p M\oplus T_v S_p,
 $$
 where $v^\perp\subset T_p M$ is the subspace orthogonal to $v$, which may be identified with $T_v S_p$ by parallel transport within $T_p M$. Let $\pi\colon T_{(p,v)}SM\to T_v S_p$ be  projection onto the vertical component.
  Then we set
  $$
  \Phi_{(p,v)}:=\pi^*(\dvol_{S_p})_{v}.
  $$
 To check this construction, let $e_i\in T_p\Gamma$ be an orthonormal set of principal directions with corresponding curvatures $\kappa_i$. 
 Note that $d\pi=\pi$ and $d\ol\nu(e_i)=(e_i, \nabla_{e_i}\nu)$, where $\nabla$ is the covariant derivative. Thus $d(\pi\circ\ol\nu)(e_i)= \pi(e_i,\nabla_{e_i} \nu)=\nabla_{e_i}\nu=\kappa_i e_i$. So we have
 \begin{multline*}
(\ol\nu^*\Phi)_p(e_1,\dots, e_{n-1})
=
(\pi\circ\ol\nu)^*(\dvol_{S_p})_{\nu(p)}(e_1,\dots, e_{n-1})\\
=
(\dvol_{S_p})_{\nu(p)}(\kappa_1e_1,\dots, \kappa_{n-1}e_{n-1})
=
GK \,(\dvol_\Gamma)_p(e_1,\dots, e_{n-1}).
\end{multline*}
Hence $\ol\nu^*\Phi=GK \,\dvol_\Gamma$, as desired.

\section{Proof of Theorem \ref{thm:main}}
Let $\nu$ be the  unit normal vector field coorienting $\Gamma$, and $u\colon M\to\R$ be the signed distance function of $\Gamma$ with respect to $\nu$.  For $0<\delta<\text{reach}(\Gamma)$, let $U:=u^{-1}([-\delta,\delta])$ be the \emph{tubular neighborhood} of $\Gamma$ of radius $\delta$. Then $u\in\C^{1,1}(U)$ \cite[Lem. 2.6]{ghomi-spruck2022}, which means that $u$ is $\C^{1,1}$ in a collection of local coordinate charts of $M$ covering $U$. Fix $\delta<\min\{\text{reach}(\Gamma)/2$, $\text{reach}(\Gamma_i)/2\}$. We assume $i$ is so large that $\Gamma_i\subset U$.

First assume that $\Gamma_i$ do not intersect $\Gamma$. Let $\Omega_i$ be the compact region between $\Gamma$ and $\Gamma_i$ in $U$. Choose a unit normal vector field $\nu_i$ along $\Gamma_i$ so that $\nu_i$ points into (away from) $\Omega_i$, if $\nu$ points away from (into) $\Omega_i$. Let $u_i$ be the signed distance function of $\Gamma_i$ with respect to $\nu_i$. Note that $u_i\in\C^{1,1}(U)$, since $\delta<\text{reach}(\Gamma_i)/2$. 

\begin{lemma}\label{lem:Gi}
$u_i\to u$ in $\C^1(U)$,  and $\nabla u_i$ are uniformly Lipschitz on $U$.
\end{lemma}
\begin{proof}
By \cite[Prop. 2.8]{ghomi-spruck2022}, the Hessians of $u$ and $u_i$ are uniformly bounded almost everywhere on $U$. Hence, on $U$, the gradients $\nabla u$ and $\nabla u_i$ are uniformly Lipschitz. Next we show that
$u_i\to u$ in $\C^1(U)$. 
For any point $p\in U\setminus \Gamma$ there exists a (geodesic) sphere $S\subset U$ of radius $|u(p)|>0$ centered at $p$ with $S\cap\Gamma=\{\ol p\}$. Similarly, assuming $i$ is so large that $p\not\in\Gamma_i$, there exists a sphere $S_i\subset U$ of radius $|u_i(p)|>0$ centered at $p$ with $S_i\cap\Gamma_i=\{\ol p_i\}$. Since $\Gamma_i\to\Gamma$ in Hausdorff distance, $u_i\to u$ in $\C^0(U)$. Thus
$$
\dist(\ol p_i,S)\leq\dist(S_i,S)\to 0,\quad\quad\text{and}\quad\quad
\dist(\ol p_i,\Gamma)\leq \dist(\Gamma_i,\Gamma)\to 0,
$$
where $\dist$ is the Riemannian distance in $M$.
Thus any limit point of $\ol p_i$ lies in $\Gamma\cap S=\{\ol p\}$, or $\ol p_i\to \ol p$.  It follows that $\nabla u_i(p)\to \nabla u(p)$, since
these gradients are unit tangent vectors at $p$ to geodesic segments $p\ol p$ and $p \ol p_i$. Since the gradients are uniformly Lipschitz, and $U\setminus\Gamma$ is dense in $U$, $\nabla u_i(p)\to\nabla u(p)$ for all $p\in U$. Finally, since $U$ has compact closure, and $\nabla u_i$ are uniformly Lipschitz, it follows that  $\nabla u_i\to\nabla u$ uniformly on $U$, which completes the proof.
\end{proof}

As pointed out in the introduction, the above lemma implies that  $\nu_i\to\nu$ in local coordinates (so the coorientations of $\Gamma_i$ are consistent with that of $\Gamma$). The above lemma together with Kirszbraun's extension theorem leads to:

\begin{lemma}\label{lem:extension}
There exists $N>0$ such that, for $i>N$, $\nu_i$ extends to a uniformly Lipschitz unit vector field on $\Omega_i$ which coincides with $\nu$ on $\Gamma$.
\end{lemma}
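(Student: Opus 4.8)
The plan is to prescribe the field on the boundary $\partial\Omega_i=\Gamma\sqcup\Gamma_i$ and extend it inward by Kirszbraun's theorem, then renormalize to unit length. First I would assemble the boundary data: set $W:=\nu$ on $\Gamma$ and $W:=\nu_i$ on $\Gamma_i$. By Lemma \ref{lem:Gi} one has $\nu=\nabla u|_\Gamma$ and $\nu_i=\nabla u_i|_{\Gamma_i}$, while by \cite[Prop.~2.8]{ghomi-spruck2022} both $\nabla u$ and $\nabla u_i$ are Lipschitz on $U$ with a common constant. Hence $W$ is Lipschitz on each of the two compact pieces of $\partial\Omega_i$; and since $\dist(\Gamma,\Gamma_i)>0$ while $W$ is bounded, $W$ is Lipschitz on all of $\partial\Omega_i$. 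This is the only step that uses the hypothesis that $\Gamma_i$ does not meet $\Gamma$.

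Next I would extend and normalize. Fixing an isometric embedding $M\hookrightarrow\R^m$ (or working in finitely many charts joined by a partition of unity), I regard $W$ as an $\R^m$-valued Lipschitz map on the closed set $\partial\Omega_i$ and invoke Kirszbraun's theorem to obtain a Lipschitz extension $\tilde W\colon\Omega_i\to\R^m$ with the same Lipschitz constant. Projecting orthogonally onto $TM$ and dividing by the length, I set
$$
V_i:=\frac{(\tilde W)^{\top}}{\,\big|(\tilde W)^{\top}\big|\,}.
$$
On $\partial\Omega_i$ the projection $(\tilde W)^{\top}$ equals $W$, which is already a unit tangent vector; thus $V_i$ extends $\nu_i$ and coincides with $\nu$ on $\Gamma$, and it is a uniformly Lipschitz unit vector field on $\Omega_i$ at every point where the denominator is nonzero.

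The hard part will be to verify that $(\tilde W)^{\top}$ is bounded away from $0$ throughout $\Omega_i$, so that $V_i$ is genuinely defined and Lipschitz. Here the smallness of $\Omega_i$ is decisive. Since $\Gamma_i\to\Gamma$ in Hausdorff distance, every point of $\Omega_i$ lies within a vanishing distance of $\partial\Omega_i$; and by Lemma \ref{lem:Gi} the two halves of the boundary data are uniformly close, both being restrictions of the unit fields $\nabla u$ and $\nabla u_i$ with $\nabla u_i\to\nabla u$ uniformly on $U$. Consequently $\tilde W$ cannot stray far from the unit sphere, so for $i$ beyond some $N$ the denominator stays above $\tfrac12$ and $V_i$ is a bona fide Lipschitz unit normal extension. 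I expect the only delicate point to be quantifying this last estimate, because the Lipschitz constant of the combined boundary data degenerates as $\Gamma_i$ approaches $\Gamma$; the cleanest route is to replace $\tilde W$ by the explicit convex combination $(1-\phi)\,\nabla u+\phi\,\nabla u_i$, where $\phi\colon\Omega_i\to[0,1]$ is any Lipschitz function with $\phi|_\Gamma=0$ and $\phi|_{\Gamma_i}=1$ (for instance $\phi=|u|/(|u|+|u_i|)$, or a Kirszbraun extension of the boundary values $0$ on $\Gamma$ and $1$ on $\Gamma_i$). As $\nabla u$ and $\nabla u_i$ are unit vectors whose angle tends uniformly to $0$, this combination has length at least $\cos\!\big(\tfrac12\sup\angle(\nabla u,\nabla u_i)\big)$, which is positive for large $i$, so the normalization is automatic and the prescribed boundary values persist.
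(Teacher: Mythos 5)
Your construction follows the same outline as the paper's: glue $\nu$ and $\nu_i$ into boundary data on $\partial\Omega_i$, extend by Kirszbraun, and renormalize; your Nash embedding plus tangential projection plays the role of the paper's parallelizable cylindrical neighborhoods and partition of unity, and either device is fine. The genuine gap is that your argument never delivers a Lipschitz constant \emph{independent of $i$}, which is the entire content of the word ``uniformly'' in the statement: it is what makes the constant $C$ in \eqref{eq:Mr} independent of $i$, and a bound that degenerates with $i$ makes the lemma useless for Theorem \ref{thm:main}. Your gluing step gives the combined data the constant $\max\{C,\,2/\dist(\Gamma,\Gamma_i)\}$, which blows up since $\dist(\Gamma,\Gamma_i)\to 0$; with such a constant $L_i$ the Kirszbraun extension also need not stay near the unit sphere, since the bound $|\tilde W|\geq 1-L_i\dist(\cdot,\partial\Omega_i)$ is vacuous once $L_i\dist(\cdot,\partial\Omega_i)\geq 1$. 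You recognize the problem, but the proposed repair does not close it: normalizing $(1-\phi)\nabla u+\phi\nabla u_i$ controls the \emph{length} of the field (that part is correct), not its Lipschitz constant. Differentiating produces the term $d\phi\otimes(\nabla u_i-\nabla u)$, and any admissible $\phi$ must climb from $0$ to $1$ across a gap of width $\dist(\Gamma,\Gamma_i)$, so this term is of size $\operatorname{Lip}(\phi)\,\|\nabla u_i-\nabla u\|_{\C^0(U)}\gtrsim \|\nabla u_i-\nabla u\|_{\C^0(U)}/\dist(\Gamma,\Gamma_i)$. Lemma \ref{lem:Gi} controls the numerator, but nothing in the hypotheses controls this ratio: Hausdorff convergence says nothing about how small the minimal separation is compared to the gradient discrepancy.

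Moreover, no choice of extension can succeed under the sole assumption $\Gamma_i\cap\Gamma=\emptyset$, because the glued boundary data itself need not be uniformly Lipschitz, and any Lipschitz field on $\Omega_i$ restricts to its boundary with the same constant. Take $M=\R^2$, $\Gamma$ the unit circle, and $\Gamma_i$ the circle of radius $1+1/i$ centered at $(1/i-i^{-3},0)$: these are disjoint, have reach $\geq 1$, and converge to $\Gamma$ in Hausdorff distance, yet near the almost-touching point the coherently oriented normals drift apart at rate $\approx s/i$ in the arclength parameter $s$ measured from the pinch, while the gap grows only like $i^{-3}+s^2/(2i)$; taking $s\approx\sqrt 2/i$ produces $p\in\Gamma$, $q\in\Gamma_i$ with $|\nu(p)-\nu_i(q)|/\dist(p,q)$ of order $i$. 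So for this sequence the conclusion of the lemma fails. The statement becomes true --- and both your argument and the paper's then work without further change --- if one adds the hypothesis $\dist(\Gamma,\Gamma_i)\geq\rho_0>0$: then for $p\in\Gamma$, $q\in\Gamma_i$,
\begin{equation*}
|\nu(p)-\nu_i(q)|\;\leq\; |\nabla u(p)-\nabla u(q)|+\|\nabla u-\nabla u_i\|_{\C^0(U)}\;\leq\; \big(C+\e_i/\rho_0\big)\dist(p,q),
\end{equation*}
with $\e_i\to 0$, so the glued data is uniformly Lipschitz. In fairness, this point is not specific to your write-up: the paper's own proof asserts uniform Lipschitzness of the glued map $w_i$ ``by Lemma \ref{lem:Gi}'' and is open to the same objection; what saves Theorem \ref{thm:main} is that the extension lemma is ultimately invoked only for pairs of hypersurfaces at a definite distance apart ($\Gamma_i$ and $\Gamma'$, or the parallel pair $\Gamma$ and $\Gamma'$, whose glued field is a restriction of $\nabla u$), which is exactly the regime where the repaired statement holds. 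A blind proof of the lemma as literally stated, however, cannot exist, and yours, like the one in the paper, founders on the uniformity of the Lipschitz bound.
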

\begin{proof}
Suppose first that $U$ is parallelizable, so that there exists a smooth orthonormal frame field $e_j$ on $U$. Then any unit vector in $T_p U$ is identified with a point of $\S^{n-1}$ by 
$$
v\mapsto\big(\langle v,e_1\rangle ,\dots, \langle v,e_n\rangle\big),
$$
where $\langle\cdot,\cdot\rangle$ denotes the metric on $M$.
 In particular, the union of $\nu$ and $\nu_i$ yields a mapping $w_i\colon \partial\Omega_i\to\S^{n-1}$. By Lemma \ref{lem:Gi}, $w_i$ are uniformly Lipschitz, say with constant $L$. So by Kirszbraun's theorem, $w_i$ admits an $L$-Lipschitz extension $\Omega_i\to\R^n$, which we again denote by $w_i$. For any point $p\in\Omega_i$ let $\ol p\in\Gamma$ be  its nearest point. Then $\dist(p,\ol p)\leq\delta_i$, where $\delta_i$ denotes the maximum length in $\Omega_i$ of the geodesics orthogonal to $\Gamma$. Assume $N$ is so large that $\delta_i<1/L$. Then, by the triangle inequality,
$$
|w_i(p)|\geq |w_i(\ol p)|-|w_i(p)-w_i(\ol p)|\geq 1-L\delta_i>0.
$$
Hence $\nu_i:=w_i/|w_i|$ yields the desired extension. 

If $U$ is not parallelizable, cover $\Gamma$ by open topological balls $B_k\subset \Gamma$. Let $U_k\subset U$ be the cylindrical neighborhoods foliated by  geodesics in $U$ which are orthogonal to $B_k$. Then $U_k$ admits a smooth orthonormal frame field. So, as discussed above, there exists a unit normal vector field $\nu_{i}^k$ on $\Omega_i\cap U_k$ which is $L$-Lipschitz, and coincides with $\nu$ and $\nu_i$ on $\partial\Omega_i\cap U_k$. 
Let $\phi_k$ be a partition of unity on $U$ subordinate to $\{U_k\}$, and set 
$$
\nu_i:=\sum_k\phi_k \nu_i^k\Big/\Big |\sum_k\phi_k \nu_i^k\Big|.
$$
We claim that for $N$ sufficiently large, $\sum_k\phi_k \nu_i^k\neq 0$ and thus $\nu_i$ is well-defined.
Indeed for any point $p\in \Omega_i\cap U_k$, we have $\ol p\in \Gamma\cap U_k$. Thus
$$
|\nu_i^k(p)-\nu(\ol p)|=|\nu_i^k(p)-\nu_i^k(\ol p)|\leq L\delta_i.
$$
Consequently, if $p\in \Omega_i\cap U_k\cap U_\ell$, then 
$$
|\nu_i^k(p)-\nu_i^\ell(p)|\leq |\nu_i^k(p)-\nu(\ol p)|+|\nu(\ol p)-\nu_i^\ell(\ol p)|\leq2L\delta_i.
$$
So if $N$ is sufficiently large, $|\nu_i^k(p)-\nu_i^\ell(p)|\leq 1$; in particular, these vectors all lie in an open hemisphere. Hence  $\sum\phi_k \nu_i^k\neq 0$, which ensures that  $\nu_i$ is the desired extension.
\end{proof}

Let $\nu_i$ be the extension given by Lemma \ref{lem:extension}. Then 
by Lemma \ref{lem:universal} and Stokes theorem, there exists a constant $C$ independent of $i$ such that
\begin{equation}\label{eq:Mr}
|\M_r(\Gamma_i)-\M_r(\Gamma)|
=
\left|\int_{\partial \Omega_i} \ol \nu_i^*(\Phi_r)\right|
=
\left|\int_{\Omega_i} d(\ol \nu_i^*(\Phi_r))\right|
=
\left|\int_{\Omega_i} \ol \nu_i^*(d\Phi_r)\right|
\leq 
C|\Omega_i|,
\end{equation}
since $\nu_i$ are uniformly Lipschitz, and so the pullbacks $\ol \nu_i^*$ are uniformly bounded. Thus 
$|\M_r(\Gamma_i)-\M_r(\Gamma)|\to 0$, which concludes the proof in the case where $\Gamma_i\cap\Gamma=\emptyset$.

To prove the general case, let $\Gamma'\subset U$ be a hypersurface parallel to $\Gamma$, i.e., a level set of $u$ different from $\Gamma$.  Let  $\Omega'$ be the region between $\Gamma'$ and $\Gamma$. Then $\Gamma_i$ will be disjoint from $\Gamma'$ for $i$ sufficiently large. Let $\Omega_i'$ be the region between $\Gamma'$ and $\Gamma_i$. Since $\Gamma$, $\Gamma'$, and $\Gamma_i$ all have uniformly positive reach, the same argument for \eqref{eq:Mr} shows that
$$
|\M_r(\Gamma_i)-\M_r(\Gamma')|\leq C|\Omega_i'|, \quad\quad\text{and}\quad\quad |\M_r(\Gamma')-\M_r(\Gamma)|\leq C|\Omega'|,
$$
for some constant $C$. Thus, by the triangle inequality, 
$$
\lim_{i\to\infty}|\M_r(\Gamma_i)-\M_r(\Gamma)|
\leq
\lim_{i\to\infty} C(|\Omega_i'|+|\Omega'|)
\leq 
2C|\Omega'|.
$$
As $\Gamma'\to\Gamma$, we have $|\Omega'|\to 0$, which completes the proof.

\begin{note}\label{note:referee}
The above proof shows that Theorem \ref{thm:main} holds for any functional on the space of closed embedded hypersurfaces $\Gamma\subset M$ with bounded reach which is obtained by integrating a universal differential form as in Lemma \ref{lem:universal}. In particular, these functionals include boundary terms of Lipschitz-Killing curvatures; see Chern \cite{chern1945}, Cheeger–M\"{u}ller–Schrader \cite{cheeger1984}, and Fu–Wannerer \cite{fu2019}.
\end{note}

\section{Proof of Theorem \ref{thm:main2}}
Let $M$ be a Cartan-Hadamard manifold and 
$\Gamma$ be a convex hypersurface in $M$. Recall that $\Gamma^\epsilon$ denote the outer parallel hypersurfaces of $\Gamma$ at distance $\epsilon\geq 0$. Let $\Omega_\epsilon$ be the region between $\Gamma^\epsilon$ and $\Gamma$, and $K_M$ denote the sectional curvature of $M$.

\begin{lemma}\label{lem:comparison}
Let $C:=\sup_{\Omega_\epsilon}|K_M|$. Then
$$
|\M_r(\Gamma^\epsilon)-\M_r(\Gamma)|\leq \big((r+1)\M_{r+1}(\Gamma^\epsilon) + C\M_{r-1}(\Gamma^\epsilon)\big)\epsilon.
$$
\end{lemma}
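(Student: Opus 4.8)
The plan is to integrate a first--variation formula for $t\mapsto\M_r(\G^t)$ over the outer parallel flow. For $0<\delta\le t\le\epsilon$ I parametrize the level sets $\G^t$ over the fixed reference hypersurface $\G^\delta$ by the normal flow and write $\M_r(\G^t)=\int_{\G^\delta}\sigma_r(S_t)\,J_t\,\dvol_{\G^\delta}$, where $S_t$ is the shape operator of $\G^t$ (defined almost everywhere and in $L^\infty$, since $\operatorname{reach}(\G^t)\ge t>0$) and $J_t$ is the Jacobian of the flow, which satisfies $J_t'=\sigma_1(S_t)\,J_t$. Along each unit--speed normal geodesic $S_t$ obeys the Riccati equation $S_t'=-S_t^2-\mathcal{R}_t$, where the curvature (tidal) operator $\mathcal{R}_t(X):=R(X,\nu)\nu$ is symmetric with eigenvalues the sectional curvatures of $M$ in $2$--planes containing the unit normal $\nu$; in particular $\|\mathcal{R}_t\|\le C$ throughout $\Omega_\epsilon$.

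Next I would differentiate $\sigma_r(S_t)J_t$ in $t$. Writing $T_{r-1}:=T_{r-1}(S_t)$ for the $(r-1)$--st Newton transformation and using the standard identity $\tfrac{d}{dt}\sigma_r(S_t)=\operatorname{tr}(T_{r-1}\,S_t')$ together with $\operatorname{tr}(T_{r-1}S_t^2)=\sigma_1\sigma_r-(r+1)\sigma_{r+1}$, the Riccati equation gives
$$
\tfrac{d}{dt}\sigma_r(S_t)+\sigma_1(S_t)\sigma_r(S_t)=(r+1)\sigma_{r+1}(S_t)-\operatorname{tr}(T_{r-1}\mathcal{R}_t).
$$
Since $J_t'=\sigma_1(S_t)J_t$, the left--hand side equals $J_t^{-1}\tfrac{d}{dt}\bigl(\sigma_r(S_t)J_t\bigr)$, so the $\sigma_1\sigma_r$ terms cancel, and integrating over $\G^\delta$ yields the first--variation formula
$$
\tfrac{d}{dt}\M_r(\G^t)=(r+1)\,\M_{r+1}(\G^t)-\int_{\G^t}\operatorname{tr}(T_{r-1}\mathcal{R}_t).
$$
This is the identity underlying the monotonicity \eqref{eq:nondecreasing} and may be quoted from \cite{ghomi-spruck2023total}.

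It remains to estimate the two terms for $t\in[\delta,\epsilon]$. Since $\G^t$ is convex its principal curvatures are nonnegative, so $T_{r-1}$ is positive semidefinite; hence the first term is nonnegative and, by \eqref{eq:nondecreasing}, at most $(r+1)\M_{r+1}(\G^\epsilon)$. For the curvature term, positivity of $T_{r-1}$ and $\|\mathcal{R}_t\|\le C$ give $|\operatorname{tr}(T_{r-1}\mathcal{R}_t)|\le C\operatorname{tr}(T_{r-1})=C(n-r)\sigma_{r-1}(S_t)$, so $\int_{\G^t}|\operatorname{tr}(T_{r-1}\mathcal{R}_t)|\le C(n-r)\M_{r-1}(\G^t)\le C(n-r)\M_{r-1}(\G^\epsilon)$, again by \eqref{eq:nondecreasing}. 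Integrating the variation formula over $[\delta,\epsilon]$, letting $\delta\searrow0$ so that $\M_r(\G^\delta)\to\M_r(\G)$ by \eqref{eq:def}, and applying the triangle inequality then produces the stated estimate $\bigl((r+1)\M_{r+1}(\G^\epsilon)+C\,\M_{r-1}(\G^\epsilon)\bigr)\epsilon$ (the dimensional constant $n-r=\operatorname{tr}(T_{r-1})/\sigma_{r-1}$ being immaterial for Theorem \ref{thm:main2}, where only the vanishing product with $\epsilon$ is used).

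The step I expect to be the main obstacle is the rigorous derivation of the variation formula under mere $\C^{1,1}$ regularity: $S_t$ is only defined almost everywhere and lies in $L^\infty$, so the Riccati equation and the Newton--transformation identities hold only a.e., and the passage from $\tfrac{d}{dt}\M_r(\G^t)$ back to $\M_r(\G^t)$ requires that $t\mapsto\M_r(\G^t)$ be Lipschitz on $[\delta,\epsilon]$ (which it is, since $\operatorname{reach}(\G^t)\ge t\ge\delta$ bounds $S_t$), and that one stay away from $t=0$, where $\G$ itself may have zero reach. The positivity of $T_{r-1}$, which converts the operator bound on $\mathcal{R}_t$ into control by $\M_{r-1}$, is the other essential ingredient, and it is precisely here that convexity of $\G$ enters.
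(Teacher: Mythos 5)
Your proposal is correct and is essentially the paper's own argument: the first-variation formula you derive from the Riccati equation and the Newton-transformation identities is precisely the content of the comparison theorem of \cite{ghomi-spruck2023total} that the paper cites as a black box, and your integration of that formula over $[\delta,\epsilon]$ together with the monotonicity \eqref{eq:nondecreasing} is exactly the paper's coarea step, including the same $\delta\searrow0$ limit to reach $\G$ itself. The one substantive difference is the constant in front of $\M_{r-1}(\G^\epsilon)$: you obtain $(n-r)C$ where the lemma asserts $C$, and here your version is in fact the correct one. In a principal frame, $\operatorname{tr}(T_{r-1}\mathcal{R}_t)=\sum_i\sigma_{r-1}(\kappa|i)\,K(e_i,\nu)$, where $\kappa|i$ is the curvature vector with $\kappa_i$ deleted, and the bound $C\operatorname{tr}(T_{r-1})=(n-r)C\,\sigma_{r-1}$ cannot be improved to $C\sigma_{r-1}$: for $r=1$ the curvature term is $\operatorname{Ric}(\nu,\nu)$, which has size $(n-1)C$. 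Concretely, for geodesic spheres in hyperbolic $3$-space one has $\tfrac{d}{dt}\M_1(\G^t)=2\M_2(\G^t)+2|\G^t|$, whereas the stated inequality, divided by $\epsilon$ and taken to the limit, would force $\tfrac{d}{dt}\M_1(\G^t)\leq 2\M_2(\G^t)+|\G^t|$; so the lemma as literally stated fails there for small $\epsilon$, and the paper's proof (which asserts the constant $C$ after citing the comparison theorem) silently drops the same dimensional factor that you kept. As you note, this discrepancy is immaterial for Theorem \ref{thm:main2}, where only the fact that the right-hand side is $O(\epsilon)$, with a constant depending on $n$, $r$, and the curvature bound on a fixed ball, is ever used; but the lemma and its proof should carry the factor $n-r$.
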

\begin{proof}
Let $u$ be the distance function of $\Gamma$. Then $|\nabla u|=1$ on the exterior region of $\Gamma$, and \cite[Thm. 3.1]{ghomi-spruck2023total} quickly yields
$$
\M_r(\Gamma^\epsilon)-\M_r(\Gamma)\leq (r+1)\int_{\Omega_\epsilon}\sigma_{r+1}(\kappa^u)+C\int_{\Omega_\epsilon}\sigma_{r-1}(\kappa^u),
$$
where $\kappa^u=(\kappa_1^u,\dots,\kappa_{n-1}^u)$ refers to the principal curvatures of the level sets of $u$. More precisely, we apply 
\cite[Thm. 3.1]{ghomi-spruck2023total} to parallel hypersurfaces $\Gamma^\delta$ for $0<\delta<\epsilon$ and take the limit as $\delta\to 0$ to obtain the above inequality.
By the coarea formula
$$
\int_{\Omega_\epsilon}\sigma_{r+1}(\kappa^u)=\int_{0\leq t\leq\epsilon} \M_{r+1}(\Gamma^t)\leq \epsilon \M_{r+1}(\Gamma^\epsilon),
$$
where the last inequality is due to the monotonicity property \eqref{eq:nondecreasing}. Similarly, $\int_{\Omega_\epsilon}\sigma_{r-1}(\kappa^u)\leq \epsilon \M_{r-1}(\Gamma^\epsilon)$, which completes the proof.
\end{proof}

Suppose there exists a sequence of convex hypersurfaces 
$\Gamma_i\subset M$ such that $\Gamma_i\to\Gamma$ with respect to Hausdorff distance.  By the triangle inequality,
\begin{multline*}
|\M_r(\Gamma_i)-\M_r(\Gamma)|
\leq \\
|\M_r(\Gamma_i)-\M_r(\Gamma_i^\epsilon)|
+
|\M_r(\Gamma_i^\epsilon)-\M_r(\Gamma^\epsilon)|
+
|\M_r(\Gamma^\epsilon)-\M_r(\Gamma)|.
\end{multline*}
As $i\to \infty$, the middle term on the right hand side vanishes by Theorem \ref{thm:main}. To bound the first term, let $B\subset M$ be a closed ball which contains $\Gamma$ in its interior, and set $C:=\sup_B|K_M|$. For $i$ sufficiently large and small $\epsilon$, we have $\Gamma_i$, $\Gamma_i^\epsilon\subset B$; therefore, Lemma \ref{lem:comparison} yields
$$
|\M_r(\Gamma_i)-\M_r(\Gamma^\epsilon_i)|
\leq 
\big((r+1)\M_{r+1}(\Gamma^\epsilon_i) + C\M_{r-1}(\Gamma^\epsilon_i)\big)\epsilon.
$$
But $\M_{r+1}(\Gamma^\epsilon_i)\to \M_{r+1}(\Gamma^\epsilon)$ and $\M_{r-1}(\Gamma^\epsilon_i)\to \M_{r-1}(\Gamma^\epsilon)$  by Theorem \ref{thm:main}, since these hypersurfaces have uniformly positive reach. Thus 
$$
\lim_{i\to\infty} |\M_r(\Gamma_i)-\M_r(\Gamma)|
\leq
\big((r+1)\M_{r+1}(\Gamma^\epsilon) + C\M_{r-1}(\Gamma^\epsilon)\big)\epsilon+
|\M_r(\Gamma^\epsilon)-\M_r(\Gamma)|.
$$
Letting $\epsilon\to 0$ and recalling \eqref{eq:def} completes the proof.

\section*{Acknowledgments}
We thank Semyon Alesker, Andreas Bernig, Joe Fu, Mario Santilli, and Joel Spruck for helpful communications. Thanks also to the anonymous referees for corrections to earlier drafts of this work,  and suggesting the content of Note \ref{note:referee}.

\bibliography{references}

\end{document}